\newtheorem{theorem}{Theorem}
\newtheorem{lemma}[theorem]{Lemma}
\title[Invariant subspaces and Deddens algebras]{Invariant subspaces and Deddens algebras}
\author{Miguel Lacruz}
\address{M. Lacruz  \\ Facultad de Matem\'aticas, Universidad de Sevilla, Avenida Reina Mercedes, 41012 Seville (Spain)}
\email{lacruz@us.es}
\thanks{This research was partially supported by  Ministerio de Ministerio de Econom\'{\i}a y Competitividad  under grant   MTM \indent 2012-30748, and by Junta de Andaluc\'{\i}a under grant FQM-3737.}
\begin{document}
\begin{abstract}
It is shown that if the Deddens algebra \({\mathcal D}_T\) associated with a quasinilpotent operator \(T\) on a complex Banach  space is  closed and localizing then \(T\) has a nontrivial closed hyperinvariant subspace.
\end{abstract}

\date{\today}
\subjclass[2010]{Primary 47A15; Secondary 47L10 }
%
%
\keywords{Deddens algebra; Extended eigenvalue;  Invariant subspace; Localizing algebra}
\maketitle

\noindent
We shall represent by \({\mathcal B}(E)\)  the algebra of all bounded linear operators  on a complex Banach space \(E.\) 
Recall that the commutant of an operator \(T \in {\mathcal B}(E)\) is the subalgebra \(\{T\}^\prime \subseteq {\mathcal B}(E)\) of all operators that commute with \(T.\) 
A subspace \(F \subseteq E\) is said to be invariant under an operator \(T \in {\mathcal B}(E)\) provided that \(TF \subseteq F\). A subspace \(F \subseteq E\) is said to be  invariant under a subalgebra \({\mathcal R} \subseteq {\mathcal B}(E)\) provided that \(F\) is invariant under  every \(R \in {\mathcal R} \). A subspace \(F \subseteq E\) is said to be hyperinvariant under an operator \(T \in {\mathcal B}(E)\) provided that \(F\) is invariant under  the commutant \(\{T\}'\). A subalgebra \({\mathcal R} \subseteq {\mathcal B}(E)\) is said to be transitive provided that the only closed subspaces invariant under \({\mathcal R}\) are the trivial ones,  that is, \(F= \{0\}\) and \(F=E.\)  As it turns out,  this is equivalent to saying that for every \(x \in E \backslash \{0\},\) the subspace \(\{Rx \colon R \in {\mathcal R}\}\) is dense in \(E.\) 

Recall that an operator \(T \in \mathcal B(E)\) is said to be quasinilpotent provided that  \(\sigma(T)=\{0\}.\) According with the spectral radius formula,  \(T\) is quasinilpotent if and only if 
\begin{align}
r(T)=\lim_{n \to \infty} \|T^n\|^{1/n}=0.
\end{align}

Let \(T \in {\mathcal B}(E)\) and consider the {\em Deddens algebra} \(\mathcal D_T\)  associated with  \(T,\)  that is, the family of those operators \(X \in {\mathcal B}(E)\) for which there is a constant \(M>0\) such that for every \(n \in \mathbb N\) and for every \(f \in E,\) 
\begin{align}
\|T^n X f \| \leq M \|T^nf\|.
\end{align}
When \(T\) is invertible this is equivalent to saying that 
\begin{align}
\label{eqn:deddens}
\sup_{n \in \mathbb N} \|T^n X T^{-n} \| <\infty.
\end{align}
It is easy to see that \(\mathcal D_T\) is indeed a unital subalgebra of \({\mathcal B}(E)\) with the nice property that  \(\{T\}^\prime \subseteq \mathcal D_T.\) Also, \({\mathcal D_T}={\mathcal B}(E)\) in case \(T\) is an isometry.
These algebras are named after Deddens because he first introduced them in the  1970s in the context of nest algebras \cite{deddens}. The description of Deddens algebras associated with some special classes of operators has been recently obtained  by Petrovic \cite{petrovic2011a,petrovic2011b}.

Let \(T \in \mathcal B(E).\)  A complex scalar \(\lambda\) is said to be an {\em extended eigenvalue} for \(T\) provided  that there exists a nonzero operator \(X \in \mathcal B(E)\) such that \(TX= \lambda XT.\) Such an operator is called an {\em extended eigenoperator} for \(T\) corresponding to  the extended eigenvalue \(\lambda.\) These notions  became popular back in the  1970s when searching for invariant subspaces.  Recently, the concepts of extended eigenvalue and  extended eigenoperator have received  a considerable amount of attention, both in the context of  invariant subspaces \cite{LR,lambert} and in the study of extended eigenvalues and  extended eigenoperators for some special classes of operators \cite{BLP,BP, BS,lauric,petrovic2007,petrovic2011a}. 

Let \(\mathcal E_T(\lambda)\) denote the set of extended eigenoperators of \(T\) associated with an extended eigenvalue \(\lambda\)   and let \(\mathcal E_T\) denote the union of the sets \(\mathcal E_T(\lambda)\) when \(\lambda\) runs through all the extended eigenvalues  for \(T\) with \(|\lambda | \leq 1.\)
It is easy to see that \(\{T\}^\prime \subseteq \mathcal E_T \subseteq \mathcal D_T.\) Both  inclusions may be proper; for instance, Petrovic~\cite{petrovic2011a} showed that if \(W\) is an injective  unilateral shift on a Hilbert space then boths inclusions \(\{W\}^\prime \subset \mathcal E_W\) and  \( \mathcal E_W \subset \mathcal D_W\) are  proper. 

A subspace \({\mathcal X} \subseteq {\mathcal B}(E)\) is said to be {\em localizing} provided that there is a closed ball \(B \subseteq E\) such that \( 0 \notin B\) and such  that for every sequence of vectors \((f_n)\) in \(B\) there is a subsequence \((f_{n_j})\) and a sequence of operators \((X_j)\) in \({\mathcal X}\) such that \(\|X_j\| \leq 1\) and such that the sequence \((X_jf_{n_j})\) converges in norm to some nonzero vector. This notion  was introduced by Lomonosov, Radjavi, and Troitsky \cite{LRT}  as a side condition to build  invariant subspaces. A typical example of a localizing algebra is a subalgebra \({\mathcal R} \subseteq {\mathcal B}(E)\) such that the closure in the  weak operator topology  of the unit ball of \(\mathcal R\) contains a nonzero compact operator.

Rodr\'{\i}guez-Piazza and the author studied  some properties of localizing algebras  in a   recent paper~ \cite{LR}.   They also obtained a result on the existence of invariant subspaces that extends and unifies previous results of  Scott Brown \cite{brown} and   Kim, Moore and  Pearcy \cite{KMP}, on the one hand, and  Lomonosov, Radjavi and Troitsky \cite{LRT}, on the other hand. The result  goes as follows.

\begin{theorem}
\label{thm:LR}
Let \(T \in {\mathcal B}(E)\) be a nonzero operator, let \(\lambda \in {\mathbb C}\)  be an extended eigenvalue of \(T\)  such that the subspace \({\mathcal E_T(\lambda)}\) of all associated extended eigenoperators is localizing and suppose that either
\begin{enumerate}
\item \(|\lambda | \neq 1,\) or
\item \(|\lambda |=1\) and \(T\) is quasinilpotent.
\end{enumerate}
Then  \(T\) has a nontrivial closed hyperinvariant subspace.
\end{theorem}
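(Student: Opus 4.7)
\medskip

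\noindent\emph{Proof plan.} I would argue by contradiction: suppose that the commutant $\{T\}'$ is transitive on $E$, i.e.\ that $T$ has no nontrivial closed hyperinvariant subspace. The first observation is algebraic. If $S\in\{T\}'$ and $X\in\mathcal E_T(\lambda)$, then combining $TX=\lambda XT$ with $TS=ST$ yields $T(SX)=\lambda(SX)T$ and $T(XS)=\lambda(XS)T$, so $\mathcal E_T(\lambda)$ is a two-sided $\{T\}'$-module. Consequently, for every $f\in E$ the subspace $\overline{\mathcal E_T(\lambda)f}$ is $\{T\}'$-invariant, and by transitivity is either $\{0\}$ or $E$; in other words, $\mathcal E_T(\lambda)$ acts transitively away from $0$. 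This is what will allow $\mathcal E_T(\lambda)$ to play the role of a transitive algebra in a Lomonosov-style argument.

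\medskip

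The core step is a fixed-point construction in which the localizing property substitutes for compactness. Let $B$ be a closed ball witnessing the localizing property, with $0\notin B$. Using transitivity of $\{T\}'$ together with the bimodule structure, for each $f\in B$ one selects an operator $Y_f\in\mathcal E_T(\lambda)$ of norm at most one sending $f$ close to the centre of $B$. A finite open subcover of $B$ and a partition of unity — the Lomonosov--Radjavi--Troitsky paste from \cite{LRT} — then produce a continuous self-map $\Phi:B\to B$ of the form $\Phi(g)=\sum_j\varphi_j(g)\,Y_jg$. Iterating $\Phi$ yields a sequence $(\Phi^n(f_0))$ in $B$; the localizing hypothesis furnishes a subsequence and operators $X_j\in\mathcal E_T(\lambda)$, $\|X_j\|\le 1$, with $X_j\Phi^{n_j}(f_0)\to h\ne 0$. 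A Schauder-type limit argument then delivers a nonzero $f\in E$ and an operator $X$ in the norm closure of $\mathcal E_T(\lambda)$ such that $Xf=f$.

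\medskip

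With this fixed-point pair the intertwining identity $T^nX=\lambda^nXT^n$ gives $X(T^nf)=\lambda^{-n}T^nf$ for every $n$, so each $T^nf$ is an eigenvector of $X$ at $\lambda^{-n}$. In case~(1), the scalars $\lambda^{-n}$ are pairwise distinct and either escape to infinity (when $|\lambda|<1$) or accumulate at $0$ (when $|\lambda|>1$); in both sub-cases the bounded spectrum of $X$, together with the passage $X\mapsto X^{-1}\in\mathcal E_T(\lambda^{-1})$ when needed, forces $T^Nf=0$ for some $N$, so that $\ker T^N$ is a nonzero proper closed subspace, hyperinvariant for $T$ because every $S\in\{T\}'$ commutes with $T^N$. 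In case~(2), quasinilpotence gives $\|T^nf\|\to 0$; combining this decay with the $\{T\}'$-bimodule structure of $\mathcal E_T(\lambda)$ and the action of $X$ on the sequence $(T^nf)$, one extracts a nontrivial closed $\{T\}'$-invariant subspace (for instance as $\overline{\mathcal E_T(\lambda)g}$ for a suitable limit vector $g$ produced from the $T^nf$). Either way, transitivity of $\{T\}'$ is contradicted.

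\medskip

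The main obstacle will be the fixed-point construction itself: the localizing property must stand in for genuine compactness all the way through a Lomonosov-type argument, and making this substitution rigorous is exactly the technical heart of \cite{LRT}. A secondary delicate point is case~(2), where $|\lambda|=1$ removes the spectral asymmetry exploited in case~(1); the extraction of a true $\{T\}'$-invariant subspace then rests on a careful interplay between quasinilpotence of $T$ and the two-sided module structure of $\mathcal E_T(\lambda)$.
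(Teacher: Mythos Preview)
First, note that the paper does not itself prove Theorem~\ref{thm:LR}; it is quoted from \cite{LR} as background for Theorem~\ref{thm:main}. There is therefore no in-paper proof to compare against directly. That said, Lemma~\ref{thm:key} (extracted from \cite{LRT}) and the proof of Theorem~\ref{thm:main} make the intended shape of the argument transparent, and it differs from yours at the decisive step.

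Your plan has a genuine gap at the fixed-point construction. In Lomonosov's original argument Schauder applies because the compact operator forces $\Phi(B)$ to be precompact; the localizing hypothesis supplies nothing of the sort. It only says that \emph{after} post-composing with auxiliary contractions $X_j$ from the unit ball of $\mathcal E_T(\lambda)$, some sequence $(X_j\Phi^{n_j}(f_0))$ converges. There is no mechanism in your outline that converts ``$X_j\Phi^{n_j}(f_0)\to h\neq 0$'' into a pair $(X,f)$ with $Xf=f$; the ``Schauder-type limit argument'' is asserted rather than supplied, and I do not see how to fill it. Separately, your treatment of case~(2) is a placeholder rather than an argument, and in case~(1) with $|\lambda|>1$ the move $X\mapsto X^{-1}\in\mathcal E_T(\lambda^{-1})$ is unavailable since nothing forces $X$ to be invertible.

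The argument that actually goes through bypasses fixed points. Your bimodule observation is precisely what is needed to run the proof of Lemma~\ref{thm:key} with $\mathcal R=\mathcal E_T(\lambda)$: that proof only uses that $\mathcal R$ is a localizing left $\{T\}'$-module, and it yields a constant $c>0$ such that every $f\in B$ admits $X\in\mathcal E_T(\lambda)$ with $\|X\|\le c$ and $TXf\in B$. Iterating produces $f_n=TX_n\cdots TX_1f_0\in B$, and commuting all copies of $T$ to the right via $T^kX_j=\lambda^kX_jT^k$ gives $f_n=\lambda^{n(n+1)/2}X_n\cdots X_1T^nf_0$. From $0<d\le\|f_n\|\le|\lambda|^{n(n+1)/2}c^n\|T^n\|\,\|f_0\|$ one obtains an immediate contradiction when $|\lambda|<1$, and when $|\lambda|=1$ a positive lower bound on $r(T)$, contradicting quasinilpotence. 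The engine is this Hilden-style iteration (exactly as in the proof of Theorem~\ref{thm:main}), not a fixed-point theorem.
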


\noindent
The aim of this note is to provide an extension of part (2) in Theorem \ref{thm:LR}  by replacing the assumption that the subspace \(\mathcal E_T(\lambda)\) be localizing with the  assumption that the Deddens algebra \(\mathcal D_T\)  be closed and localizing.  Our main result can be stated as follows.

\begin{theorem}
\label{thm:main}
Let \(T \in {\mathcal B}(E)\)  be a nonzero quasinilpotent operator. If the Deddens algebra \({\mathcal D}_T\) is closed and localizing then \(T\) has a nontrivial closed hyperinvariant subspace.
\end{theorem}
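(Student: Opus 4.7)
I would argue by contradiction. Suppose $T$ is a nonzero quasinilpotent operator whose Deddens algebra is closed and localizing, yet $T$ has no nontrivial closed hyperinvariant subspace. Since $\{T\}' \subseteq \mathcal{D}_T$, the commutant is then transitive on $E$. Observing that $\mathcal{E}_T(1) = \{T\}'$ and $|1|=1$, the strategy is to deduce that $\{T\}'$ is itself a localizing subspace and then invoke Theorem~\ref{thm:LR}(2) with $\lambda = 1$ to reach a contradiction.

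First I would put a natural Banach-algebra norm on $\mathcal{D}_T$ by setting
\[
\|X\|_{\mathcal{D}} \;=\; \inf\bigl\{ M > 0 : \|T^n X f\| \le M\|T^n f\|\ \text{for all}\ n \ge 0,\ f \in E\bigr\}.
\]
A routine check shows that $(\mathcal{D}_T, \|\cdot\|_{\mathcal{D}})$ is a unital Banach algebra, and $\|X\| \le \|X\|_{\mathcal{D}}$ (take $n=0$). The hypothesis that $\mathcal{D}_T$ is operator-norm closed makes $\mathcal{D}_T$ complete in the operator norm as well; completeness in $\|\cdot\|_{\mathcal{D}}$ follows from a direct Fatou-type argument. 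The open mapping theorem then produces a constant $C > 0$ with
\[
\|T^n X f\| \;\le\; C\,\|X\|\,\|T^n f\| \qquad (X \in \mathcal{D}_T,\ n \ge 0,\ f \in E).
\]
This uniform inequality is the quantitative substitute for the single extended-eigenvalue hypothesis in Theorem~\ref{thm:LR}, and it allows the entire Deddens algebra to be treated on an equal footing with a single $\mathcal{E}_T(\lambda)$.

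Next, let $B$ be a ball witnessing the localizing property of $\mathcal{D}_T$. For any sequence $(f_n) \subset B$, the localizing property yields a subsequence $(f_{n_j})$ and operators $X_j \in \mathcal{D}_T$ with $\|X_j\| \le 1$ and $X_j f_{n_j} \to g \neq 0$. Combining the uniform bound above with the transitivity of $\{T\}'$ and the quasinilpotence of $T$, I would construct operators $C_j \in \{T\}'$ of uniformly bounded norm with $C_j f_{n_j} \to g$, thereby proving that $\{T\}' = \mathcal{E}_T(1)$ is localizing; Theorem~\ref{thm:LR}(2) then delivers the desired hyperinvariant subspace and contradicts the standing assumption. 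The main obstacle is precisely this transfer from $\mathcal{D}_T$ to $\{T\}'$: the transitivity of $\{T\}'$ alone only supplies pointwise approximants to $X_j f_{n_j}$ with no a priori norm control, so one must lean on the uniform Deddens estimate together with the decay $\|T^n\|^{1/n} \to 0$ to force those approximants to lie in a bounded set. This is the step in which the proof genuinely uses the \emph{whole} Deddens algebra, rather than a single $\mathcal{E}_T(\lambda)$, and where the closedness hypothesis pays off by supplying the uniform constant $C$.
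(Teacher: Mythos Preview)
Your derivation of the uniform constant $C$ via the open mapping theorem is a valid alternative to the paper's Baire-category argument (Lemma~\ref{thm:ubp}), and up to that point your outline is fine.

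The genuine gap is the second step. You propose to transfer the localizing property from $\mathcal{D}_T$ to $\{T\}'$ by producing, for each $X_j \in \mathcal{D}_T$ with $\|X_j\| \le 1$ and $X_j f_{n_j} \to g$, uniformly bounded $C_j \in \{T\}'$ with $C_j f_{n_j} \to g$. You correctly identify this as ``the main obstacle,'' but you do not carry it out; the remark that one should ``lean on the uniform Deddens estimate together with the decay $\|T^n\|^{1/n} \to 0$'' is not a construction. Transitivity of $\{T\}'$ only supplies, for each individual vector, \emph{some} commutant operator mapping it near a target, with no control on its norm, and neither the uniform Deddens bound nor quasinilpotence obviously converts these pointwise approximants into a norm-bounded family. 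Without this step the argument does not close, and it is not clear that the proposed route can be made to work at all.

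The paper avoids this difficulty entirely. Rather than reducing to Theorem~\ref{thm:LR}, it runs a Hilden-type iteration directly inside $\mathcal{D}_T$. Using transitivity of $\{T\}'$ together with the localizing ball $B$ (Lemma~\ref{thm:key}), one finds a constant $c>0$ such that for every $f \in B$ there is $X \in \mathcal{D}_T$ with $\|X\| \le c$ and $TXf \in B$. Iterating from any $f_0 \in B$ produces $f_n = TX_n \cdots TX_1 f_0 \in B$, and the uniform Deddens inequality then collapses the product to $\|f_n\| \le (cC)^n \|T^n f_0\|$. Since $0 \notin B$, the left side is bounded below by some $d>0$, so $\|T^n\|^{1/n} \ge d^{1/n}/(cC)$ and hence $r(T) \ge 1/(cC) > 0$, contradicting quasinilpotence. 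No passage from $\mathcal{D}_T$ back to $\{T\}'$ is required.
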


\noindent
Notice that, under the assumption that \(\mathcal D_T\) be closed, part (2) of Theorem \ref{thm:LR} is a  consequence of Theorem~\ref{thm:main} since \(\mathcal E_T(\lambda) \subseteq \mathcal D_T,\) and that Theorem~\ref{thm:main} is strictly more general than part (2)  of Theorem \ref{thm:LR},  because the inclusion \(\mathcal E_T \subseteq \mathcal D_T\) is proper in general.
Let us start with a  general  result  about Deddens algebras before we proceed with the proof of Theorem~\ref{thm:main}. This result characterizes when the Deddens algebra \(\mathcal D_T\) is closed in the operator norm. The corresponding result for spectral radius algebras was obtained by Lambert and Petrovic \cite{LP}.


\begin{lemma}
\label{thm:ubp}
 Let \(T \in \mathcal B(E).\) The following conditions are equivalent:
 \begin{enumerate}
 \item The Deddens algebra \(\mathcal D_T\) is closed in the operator norm topology.
 \item There is a constant \(M>0\) such that for every \(X \in \mathcal D_T,\) for all \(n \in \mathbb N\) and for all \(f \in E\) we have 
\begin{align}
\|T^n Xf\| \leq M \|X\| \cdot\|T^n f \| 
\end{align}
\end{enumerate}
\end{lemma}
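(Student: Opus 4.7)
\medskip
\noindent\textbf{Proof plan.} The direction (2)$\Rightarrow$(1) is routine and I would dispatch it first: if $X_k \to X$ in operator norm with each $X_k \in \mathcal{D}_T$, the hypothesis gives $\|T^n X_k f\| \leq M \|X_k\| \|T^n f\|$ for every $n$ and $f$, and passing to the limit in $k$ (together with $\|X_k\| \to \|X\|$) yields $\|T^n X f\| \leq M \|X\| \|T^n f\|$, so $X \in \mathcal{D}_T$.

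For the substantive direction (1)$\Rightarrow$(2) my plan is to introduce an auxiliary norm on $\mathcal{D}_T$ that encodes the best Deddens constant and then invoke the open mapping theorem. For $X \in \mathcal{D}_T$ set
\[
M(X) := \sup\left\{\frac{\|T^n X f\|}{\|T^n f\|} : n \in \mathbb{N},\ f \in E,\ T^n f \neq 0\right\},
\]
which is finite by definition of $\mathcal{D}_T$ (observe also that the Deddens condition forces $T^n X f = 0$ whenever $T^n f = 0$, so no cases are missed), and define $\|X\|_D := \|X\| + M(X)$, which is easily checked to be a norm on $\mathcal{D}_T$.

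The key step — and the one I expect to require the most care — is to show that $(\mathcal{D}_T, \|\cdot\|_D)$ is a Banach space \emph{without} using hypothesis (1). Given a $\|\cdot\|_D$-Cauchy sequence $(X_k)$, it is also operator-norm Cauchy and hence converges to some $X \in \mathcal{B}(E)$; fixing $n$ and $f$ and letting $j \to \infty$ in the inequality $\|T^n(X_k - X_j) f\| \leq \varepsilon \|T^n f\|$ (valid for $k, j \geq N_\varepsilon$) produces $\|T^n(X_k - X) f\| \leq \varepsilon \|T^n f\|$, which shows simultaneously that $X - X_N \in \mathcal{D}_T$ (so $X \in \mathcal{D}_T$) and that $X_k \to X$ in $\|\cdot\|_D$. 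Once this is in hand, hypothesis (1) makes $(\mathcal{D}_T, \|\cdot\|)$ a Banach space as well, so the identity map $\iota : (\mathcal{D}_T, \|\cdot\|_D) \to (\mathcal{D}_T, \|\cdot\|)$ is a bounded linear bijection between Banach spaces. The bounded inverse theorem then provides $C > 0$ with $M(X) \leq \|X\|_D \leq C \|X\|$ for every $X \in \mathcal{D}_T$, which is precisely statement (2). The real content of (1) is thus that the two norms $\|\cdot\|$ and $\|\cdot\|_D$ are automatically equivalent on $\mathcal{D}_T$; after setting up the auxiliary norm correctly, no further delicate estimate is required.
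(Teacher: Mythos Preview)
Your argument is correct. Both directions go through as you outline: the completeness of $(\mathcal D_T,\|\cdot\|_D)$ is established independently of (1) exactly as you sketch, and then (1) lets you invoke the bounded inverse theorem to force $M(X)\le C\|X\|$.

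The paper proceeds differently for (1)$\Rightarrow$(2). Rather than building an auxiliary norm, it writes $\mathcal D_T=\bigcup_{k\in\mathbb N}\mathcal F_k$ where $\mathcal F_k=\{X\in\mathcal D_T:\|T^nXf\|\le k\|T^nf\|\text{ for all }n,f\}$, notes each $\mathcal F_k$ is closed, and applies Baire's theorem directly (using (1) to ensure $\mathcal D_T$ is complete) to find some $\mathcal F_{k_0}$ with nonempty interior; a standard translation-and-scaling trick then extracts the uniform constant. So the paper runs the uniform-boundedness argument by hand, while you package it through the open mapping theorem and a second Banach norm. Your route is more structural---it isolates the ``optimal Deddens constant'' $M(X)$ as a seminorm and identifies (1) with the equivalence of $\|\cdot\|$ and $\|\cdot\|_D$---at the cost of the extra verification that $(\mathcal D_T,\|\cdot\|_D)$ is complete. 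The paper's route is shorter and avoids that verification, but is less conceptual. Both ultimately rest on Baire category.
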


\noindent 

\begin{proof}[Proof of Lemma \ref{thm:ubp}] Suppose  \(\mathcal D_T\) is closed and consider for every \(k \in  \mathbb N\) the closed set \(\mathcal F_k \) of those operators  \( X \in \mathcal D_T \) that satisfy the inequality \(  \|T^nXf \| \leq k \|T^nf \| \) for all  \(n \in \mathbb N\) and   for all \( f \in E.\) Then  we have
\[
\mathcal D_T = \bigcup_{k \in \mathbb N} \mathcal F_k.
\]
It follows from Baire's theorem that there is some \(k_0 \in \mathbb N\) such that \(\mathcal F_{k_0}\) has nonempty interior, that is, there is some \(X_0 \in \mathcal F_{k_0}\) and there is some \(\varepsilon >0\)  such that \(\{ X \in \mathcal D_T \colon \|X-X_0 \| \leq \varepsilon \} \subseteq \mathcal F_{k_0}.\) Let \(Y \in \mathcal D_T\) such that \(\|Y\| \leq 1\) and let \(X=X_0 + \varepsilon Y.\) Then we have
\(
\varepsilon \|T^nYf \| \leq \| T^n X_0f\| + \|T^nXf \| \leq 2k_0 \|T^nf\|.
\)
Finally, we conclude that for every \(X \in \mathcal D_T,\) for all \(n \in \mathbb N\) and for all \(f \in \mathcal D_T\) we have 
\[
\|T^nXf \| \leq \frac{2k_0}{\varepsilon} \|X\| \cdot  \|T^nf\|.
\]
The converse is trivial because  if such a constant \(M>0\) exists then 
\[
\mathcal D_T = \bigcap_{n \in \mathbb N} \bigcap_{f \in E} \{ X \in \mathcal B(E) \colon \|T^n Xf \| \leq M \|X \| \cdot \|T^nf\|\}.
\]
so that \(\mathcal D_T\)  is closed since it is the intersection of  a family of closed sets.
\end{proof}

\noindent
An easy proof of the nontrivial part of Lemma \ref{thm:ubp} can be obtained from the uniform boundedness principle in the special case that \(T\) is an invertible operator. The proof goes as follows.

 \begin{proof}[Proof of Lemma \ref{thm:ubp}  when \(T\) is invertible] Consider the operator \(\Phi \colon \mathcal D_T \to \mathcal D_T\) defined by the expression \(\Phi (X)=TXT^{-1}\) for all \(X \in \mathcal D_T.\)  Notice that \(\Phi^n(X)= T^nXT^{-n}\) for all \(n \in \mathbb N,\) so that \(\sup_n  \|\Phi^n(X)\| < \infty.\) Now it follows from the uniform boundedness principle that \(\sup_n \|\Phi_n\| <\infty.\) This means that there is a constant \(M>0\) such that \(\|T^n X T^{-n}\| \leq M \| X\| \) for all \(n \in \mathbb N\) and  for all \(X \in \mathcal D_T.\) Therefore  \(\|T^nXT^{-n}g \| \leq M \|X\| \cdot  \|g\|\) for all \(g \in E,\) and taking \(g=T^nf\) we get  \(\|T^nXf \| \leq M \|X\| \cdot \|T^nf\|.\)
\end{proof}

The key for the proof of Theorem \ref{thm:main} is a lemma that we have extracted from the proof of Theorem~2.3 in the paper of Lomonosov, Radjavi and Troitsky \cite{LRT}. This lemma can be stated as follows.

\begin{lemma} 
\label{thm:key}Let \(T \in \mathcal B(E)\) be a nonzero operator such that \(\{T\}^\prime\)  is a transitive  algebra,  let  \(\mathcal R \subseteq \mathcal B(E)\) be a localizing algebra such  that  \(\{T\}^\prime \subseteq \mathcal R,\) and let \(B \subseteq E\) be a closed ball that makes  \({\mathcal R}\) a localizing algebra. There is a constant  \(c>0\) such that for every \(f \in B\) there is an  \(X \in \mathcal R\) such that  \(TXf \in B\) and \(\|X\| \leq c.\)
\end{lemma}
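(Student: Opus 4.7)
The natural approach is argument by contradiction. Suppose no such constant $c$ exists; then for every $n \in \mathbb{N}$ one may select $f_n \in B$ with the property that every $X \in \mathcal{R}$ satisfying $TXf_n \in B$ obeys $\|X\| > n$. Since the ball $B$ witnesses the localizing property of $\mathcal{R}$, applied to the sequence $(f_n) \subseteq B$ it yields a subsequence $(f_{n_j})$ and operators $X_j \in \mathcal{R}$ with $\|X_j\| \leq 1$ such that $X_j f_{n_j}$ converges in norm to some nonzero vector $g \in E$.

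The next step is to exploit the transitivity of $\{T\}^\prime$ twice. Because $\ker T$ is a closed $\{T\}^\prime$-invariant subspace and $T \neq 0$, transitivity forces $\ker T = \{0\}$, so in particular $Tg \neq 0$. Applying transitivity at the vector $Tg$, the orbit $\{S(Tg) : S \in \{T\}^\prime\}$ is dense in $E$; because every $S \in \{T\}^\prime$ commutes with $T$, this set coincides with $\{T(Sg) : S \in \{T\}^\prime\}$. One may therefore fix a single operator $S \in \{T\}^\prime$ for which $TSg$ lies in the (nonempty open) interior of $B$.

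Finally, set $Y_j := S X_j$. Since $\{T\}^\prime \subseteq \mathcal{R}$ and $\mathcal{R}$ is an algebra, $Y_j \in \mathcal{R}$, with $\|Y_j\| \leq \|S\|$ uniformly in $j$. By continuity, $TY_j f_{n_j} = TS(X_j f_{n_j}) \to TSg$, so for all sufficiently large $j$ the vector $TY_j f_{n_j}$ lies in $B$. The defining property of $f_{n_j}$ then forces $\|Y_j\| > n_j$, contradicting $\|Y_j\| \leq \|S\|$ as soon as $n_j > \|S\|$. This establishes the lemma, with $c$ taken to be any number exceeding $\|S\|$.

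The principal obstacle is the construction in the second paragraph: one needs a single commutant element $S$ such that, after one further application of $T$, the vector $TSg$ sits in the open interior of $B$. This is exactly where injectivity of $T$ (deduced from transitivity of $\{T\}^\prime$ and $T \neq 0$) is indispensable, since otherwise one could not guarantee $Tg \neq 0$ and the relevant orbit would not be dense. The inclusion $\{T\}^\prime \subseteq \mathcal{R}$ is likewise essential, as it is what keeps $Y_j = SX_j$ inside the algebra $\mathcal{R}$.
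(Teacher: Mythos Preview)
Your proof is correct and follows essentially the same route as the paper's: negate the conclusion, use the localizing property to extract a subsequence with $X_jf_{n_j}\to g\neq 0$, deduce injectivity of $T$ from transitivity of $\{T\}'$, pick $S\in\{T\}'$ with $TSg=STg\in\operatorname{int}B$, and contradict the choice of $f_{n_j}$ via the uniformly bounded operators $SX_j\in\mathcal R$. One small quibble: your closing remark that ``$c$ may be taken to be any number exceeding $\|S\|$'' is not meaningful, since $S$ was produced only under the contradiction hypothesis and does not survive it---the argument shows merely that \emph{some} $c$ exists.
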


\begin{proof}[Proof of Lemma \ref{thm:key}]
Assume the commutant \(\{T\}^\prime\) is a transitive algebra. Since the closed subspace \(\ker T\) is invariant under  \(\{T\}^\prime\) and  since \(T \neq 0,\) we must have \(\ker T=\{0\},\) so that \(T\) is injective. We ought to show  that there is  some constant  \(c>0\) such that for every \(f \in B\) there is an  \(X \in \mathcal R\) such that \(\|X\| \leq c\) and \(TXf \in B.\) We proceed by contradiction. Otherwise, for every \(n \in {\mathbb N}\) there is an \(f_n \in B\) such that the condition \(X \in \mathcal R\) and \(TXf_n \in B\) implies \(\|X\| >n.\) Since \(\mathcal R\) is localizing, there is a subsequence \((f_{n_j})\) and there is a sequence \((X_j)\) in \(\mathcal R\) with \(\|X_j\| \leq 1,\) and such that  \((X_jf_{n_j})\) converges in norm to some nonzero vector \(f \in E.\) Therefore,  \((TX_jf_{n_j})\) converges in norm to \(Tf.\)  Since \(T\) is injective,  \(Tf \neq 0,\) and since \(\{T\}^\prime\) is transitive, there is an \(R \in \{T\}^\prime\)  such that \(RTf \in {\rm int}\,B.\) Hence, there is some \(j_0 \geq 1\) such that \(RTX_j f_{n_j} \in B\) for all \(j \geq j_0.\) Since \(RT=TR,\) we have \(TRX_j f_{n_j} \in B\) for all \(j \geq j_0.\) Since  \( RX_j \in  \mathcal R,\)   the choice of the sequence \((f_n)\) implies  \(\| RX_j\| > n_j\)  for all \(j \geq j_0.\) Finally, this leads to a contradiction, because \(\|  RX_j\| \leq  \|R\|\) for all \(j \geq 1.\) This completes the proof of Lemma \ref{thm:key}.
\end{proof}

The technique for the  proof of Theorem \ref{thm:main} is an iterative procedure that is reminiscent of an argument at the end of the proof in Hilden's simplification for the striking theorem of Lomonosov \cite{lomonosov}  that a nonzero compact operator on a complex Banach space has a nontrivial hyperinvariant subspace. We recommend the    book of Rudin  \cite{rudin}  for an exposition of this  argument.

\begin{proof}[Proof of Theorem \ref{thm:main}]
Start with any vector \(f_0 \in B\) and use Lemma \ref{thm:key} to choose an operator \(X_1 \in {\mathcal D}_T\) such that \(\|X_1 \| \leq c\) and such that  \(T X_1f_0 \in B.\) Now use again Lemma \ref{thm:key}  to choose another operator  \(X_2 \in {\mathcal D}_T\) such that \(\|X_2\| \leq c\) and \(T X_2TX_1f_0 \in B.\) Continue this ping pong game to obtain a sequence of vectors \(f_n \in  B\) and a sequence of operators \(X_n \in  {\mathcal D}_T\) such that \(\|X_n\| \leq c\) and such that \(f_n = TX_n \cdots TX_1f_0.\)  Now apply Lemma \ref{thm:ubp} to find a constant \(M>0\) such that \(\|T^n Xf \| \leq M  \|X\| \cdot  \|T^nf \| \) for every \(X \in \mathcal D_T,\) for all \(n \in \mathbb N\) and for all \(f \in H.\) Notice that \(\|f_1 \|=\|TX_1f_0\| \leq  c M   \|Tf_0\|.\)  Also, notice that 
 \[
 \|f_2\|  =\|TX_2TX_1f_0\|  \leq c M \| T^2X_1f_0\|  \leq (cM)^2 \|T^2f_0\|,
 \]
 and in general \(\|f_n \| \leq (cM)^n   \|T^nf_0\|.\)  Let \(d=\min \{\|x\| \colon x \in B\}.\) It is plain that \(d >0\) because \(0 \notin B.\)  
 Then, for all \(n \in \mathbb N\) we have
\(
0< d \leq \|f_n\|  \leq (cM)^n \|T^n f_0\|,
\)
and this gives information on the spectral radius of \(T,\) namely,
\[
r(T)= \lim_{n \to \infty} \|T^n\|^{1/n} \geq \frac{1}{cM} >0.
\]
We arrived at a contradiction because \(T\) is quasinilpotent. This completes the proof of Theorem \ref{thm:main}.
\end{proof}

\end{document}